\title{Conditioning of linear systems arising from penalty methods\thanks{%
Partially supported by NSF Grant DMS 2110379.
}}
\author{William Layton\footnotemark[2]
        \and Shuxian Xu\footnotemark[3]}
\shorttitle{CONDITIONING OF LINEAR SYSTEMS ARISING FROM PENALTY METHODS} 
\begin{document}

\maketitle

\renewcommand{\thefootnote}{\fnsymbol{footnote}}

\footnotetext[2]{ Department of Mathematics, University of Pittsburgh, Pittsburgh, PA 15260,
USA({\tt wjl@pitt.edu})}
\footnotetext[3]{Department of Mathematics, University of Pittsburgh, Pittsburgh, PA 15260,
USA({\tt shx34@pitt.edu})}

\begin{abstract}
Penalizing incompressibility in the Stokes problem leads, under mild
assumptions, to matrices with condition numbers $\kappa =\mathcal{O}%
(\varepsilon ^{-1}h^{-2})$, $\varepsilon =$\ penalty parameter $<<1$, and $%
h= $ \ meshwidth $<1$. Although $\kappa =\mathcal{O}(\varepsilon ^{-1}h^{-2})
$ is large, practical tests seldom report difficulty in solving these
systems. In the SPD case, using the conjugate gradient\ method, this is
usually explained by spectral gaps occurring in the penalized coefficient
matrix. Herein we point out a second contributing factor. Since the solution
is approximately incompressible, solution components in the eigenspaces
associated with the penalty terms can be small. As a result, the \textit{%
effective} condition number can be much smaller than the standard condition
number.
\end{abstract}

\begin{keywords}
penalty method, effective condition number
\end{keywords}

\begin{AMS}
65F35, 15A12
\end{AMS}

\section{Estimating conditioning}

\begin{center}
\textit{We dedicate this paper to Professor Owe Axelsson.}
\end{center}

Penalty methods have advantages and disadvantages. Disadvantages include the
need to pick a specific value of the penalty parameter $\varepsilon $ and
ill-conditioning of the associated linear system. We show herein that,
measured by the system's effective condition number, this ill-conditioning
is not severe, Theorem 2.1 below.\ This observation is developed herein for
the standard penalty approximation to the Stokes problem%
\begin{equation*}
-\triangle u+\nabla p=f(x)\ \ \text{and} \ \ \nabla \cdot u=0,
\end{equation*}
in a polygonal domain $\Omega $ with boundary condition $u=0$ on $\partial
\Omega $ and normalization $\int_{\Omega }pdx=0$. Let $(\cdot ,\cdot
),||\cdot ||$ denote the $L^{2}(\Omega )$ inner product and norm and $|\cdot
|$\ the usual euclidean norm of a vector and matrix. $X^{h}$ denotes a
standard, conforming, velocity finite element space of continuous, piecewise
polynomials that vanish on $\partial \Omega $. The penalty approximation
results from replacing $\nabla \cdot u=0$\ by $\nabla \cdot u^{\varepsilon
}=-\varepsilon p^{\varepsilon }$ and eliminating $p^{\varepsilon }$. It is:
find $u^{\varepsilon }\in X^{h}$ satisfying%
\begin{equation}
(\nabla u^{\varepsilon },\nabla v)+\varepsilon ^{-1}(\nabla \cdot
u^{\varepsilon },\nabla \cdot v)=(f,v)\text{ for all }v\in X^{h}.
\label{eq:PenaltyStokeseqns}
\end{equation}%
Picking a basis $\left\{ \phi _{1},\cdot \cdot \cdot ,\phi _{N}\right\} $
for $X^{h}$ leads to a linear system with coefficient matrix%
\begin{equation*}
A_{ij}=(\nabla \phi _{i},\nabla \phi _{j})+\varepsilon ^{-1}(\nabla \cdot
\phi _{i},\nabla \cdot \phi _{j}),i,j=1,\cdot \cdot \cdot ,N.
\end{equation*}%
Standard condition number estimates for this system yield bounds like $%
\kappa \leq C\varepsilon ^{-1}h^{-2}$.

Recall that the standard condition number, \cite{A96,W63},
introduced in \cite{NG47,T48} and still of interest, e.g., \cite%
{TV10}, measuring the correlation between relative error and relative
residual, the distance to the nearest singular matrix and the difficulty,
cost and worse-case accuracy in solving a linear system with $A$, is $\kappa
:=|A||A^{-1}|$. However, estimates of the above with $\kappa $\ are often
(but not always, \cite{B98}) too rough because they do not consider the size
of solution components in the matrix eigenspaces. Thus, generalizations
exist, such as the Kaporin condition number \cite{K94} and extensions based
on generalized inverses \cite{D86}, that can be used to obtain better
predictions. One important, easy to compute and interpret generalization is
the \textit{condition number at the solution}, also called the \textit{%
effective condition number}.

\begin{definition}
Let $Ac=b$. Then, $\kappa (c)$, the condition number at the solution $c$, is%
\begin{equation*}
\kappa (c):=|A^{-1}|\frac{|Ac|}{|c|}.
\end{equation*}
\end{definition}

Clearly $\kappa (c)\leq \kappa $ and $\kappa (c)$ takes into account both
the spectrum and the magnitude of solution components across eigenspaces. It
is also known, e.g. \cite{A96}, that the relative error in approximations is
bounded by $\kappa (c)$\ times the relative residual. To our knowledge, this
extension is due to Chan and Foulser \cite{C88}. It was soon thereafter
developed by Axelsson \cite{A95,A96} and has been further developed
in important work in \cite{AK00,AK01,B99,LH08,LHH08,CH94}.

Section 2 gives the proof that $\kappa (u^{\varepsilon })<<\kappa $. Section
3 presents consistent numerical tests.

\section{Analysis of $\protect\kappa (u^{\protect\varepsilon })$}

We assume $X^{h}$\ satisfies the following two assumptions typical, e.g. 
\cite{BS08,C02}, of finite element spaces on quasi-uniform meshes.

A1: [Inverse estimate] For all $v\in X^{h},$ $||\nabla v||\leq Ch^{-1}||v||.$

A2: [Norm equivalence] Let $v=\sum_{i=1}^{N}a_{i}\phi
_{i}(x),N=Ch^{-d},d=\dim (\Omega )=2$ or $3$. Then $||v||$ and $\sqrt{\frac{1%
}{N}\sum_{i=1}^{N}a_{i}^{2}}$ are uniform-in-$h$ equivalent norms.

\begin{theorem}
Let A1 and A2 hold. Select $|\cdot |$ to be the euclidean norm. Let $f^{h}$
be the projection of $f$ into the finite element space and $u^{\varepsilon }$%
\ the solution of (\ref{eq:PenaltyStokeseqns}). Then,%
\begin{eqnarray*}
\max_{f^{h}}\kappa (u^{\varepsilon }) &=&\kappa \leq C(h^{-2}+\varepsilon
^{-1}h^{-2}) \\
\kappa (u^{\varepsilon }) &\leq &C\frac{||f^{h}||}{||u^{\varepsilon }||}%
\text{, and} \\
\kappa (u^{\varepsilon }) &\leq &Ch^{-2}\left( 1+\frac{h}{\varepsilon }\frac{%
||\nabla \cdot u^{\varepsilon }||}{||u^{\varepsilon }||}\right) .
\end{eqnarray*}
\end{theorem}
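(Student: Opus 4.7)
The plan is to establish the two spectral bounds $|A|\le CN^{-1}(h^{-2}+\varepsilon^{-1}h^{-2})$ and $|A^{-1}|\le CN$ (with $N=\dim X^h=Ch^{-d}$), and then combine them with three different ways of estimating $|Ac|/|c|$ to obtain the three displayed inequalities. Since $A$ is SPD, both spectral bounds are Rayleigh-quotient computations: writing $v=\sum a_i\phi_i$ gives $a^TAa=||\nabla v||^2+\varepsilon^{-1}||\nabla\cdot v||^2$, so the inverse estimate A1 (with $||\nabla\cdot v||\le||\nabla v||$) plus A2 yields the upper bound, while Poincar\'e on the first summand (and A2) gives the lower bound. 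Their product is $\kappa\le C(h^{-2}+\varepsilon^{-1}h^{-2})$, the first line. The equality $\max_{f^h}\kappa(u^\varepsilon)=\kappa$ is then immediate, since as $f^h$ sweeps $X^h$ the coefficient vector $c=A^{-1}b$ covers all of $\mathbb{R}^N$, so $\sup_c |Ac|/|c|=|A|$.

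For the second inequality, I would write $b_j=(f^h,\phi_j)=(Mf)_j$, where $M$ is the mass matrix. Applying A2 to $M$ gives $|M|\le C/N$, so $|b|=|Mf|\le CN^{-1}|f|\le CN^{-1/2}||f^h||$, and A2 also gives $|c|\ge CN^{1/2}||u^\varepsilon||$. Hence $|Ac|/|c|=|b|/|c|\le CN^{-1}||f^h||/||u^\varepsilon||$, and multiplying by $|A^{-1}|\le CN$ the $N$ factors cancel to yield the stated bound.

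For the third and sharpest inequality the crude bound $|Ac|\le|A||c|$ is too lossy in the penalty term, so I would estimate $|Ac|$ by duality: $|Ac|=\sup_{|a|=1}\bigl[(\nabla u^\varepsilon,\nabla v)+\varepsilon^{-1}(\nabla\cdot u^\varepsilon,\nabla\cdot v)\bigr]$ with $v=\sum a_i\phi_i$. Cauchy--Schwarz and A1 on both factors give $(\nabla u^\varepsilon,\nabla v)\le Ch^{-2}||u^\varepsilon||\,||v||$; for the penalty term, A1 is applied only to the test function, yielding $\varepsilon^{-1}(\nabla\cdot u^\varepsilon,\nabla\cdot v)\le C\varepsilon^{-1}h^{-1}||\nabla\cdot u^\varepsilon||\,||v||$. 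Replacing $||v||$ by $CN^{-1/2}$ via A2, dividing by $|c|\ge CN^{1/2}||u^\varepsilon||$, and multiplying by $|A^{-1}|\le CN$, the $N$ factors cancel and the third bound $Ch^{-2}(1+(h/\varepsilon)||\nabla\cdot u^\varepsilon||/||u^\varepsilon||)$ emerges.

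The main obstacle is clean bookkeeping of the $N$ powers introduced by A2, so that numerator and denominator acquire matching factors and the final bounds depend only on $h$ and $\varepsilon$. The substantive modeling choice, in the third bound, is to apply A1 only to the test function in the penalty term: doing so preserves the factor $||\nabla\cdot u^\varepsilon||/||u^\varepsilon||$, which is small for nearly incompressible $u^\varepsilon$ and is the mechanism by which $\kappa(u^\varepsilon)\ll\kappa$, i.e., the paper's main point.
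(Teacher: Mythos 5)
Your proof is correct and follows essentially the same route as the paper: a coercivity/Poincar\'e bound on $|A^{-1}|$, and a duality estimate of the residual $|Ac|$ in which the inverse inequality A1 is applied only to the test function in the penalty term, so that the factor $||\nabla \cdot u^{\varepsilon }||/||u^{\varepsilon }||$ survives. The only difference is bookkeeping: you carry the factor $N$ explicitly ($|A^{-1}|\leq CN$ and $|Ac|/|c|\leq CN^{-1}(\cdots )$), whereas the paper absorbs it into norm equivalences between $||\cdot ||$ and the scaled euclidean norm; the products, and hence all three stated bounds, agree.
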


\begin{proof}
We first estimate $|A^{-1}|^{2}=\max_{b}|A^{-1}b|^{2}/|b|^{2}.$ Given an
arbitrary right-hand side $\overrightarrow{b}$, let the linear system for
the undetermined coefficients $(c_{1},c_{2},\cdot \cdot \cdot ,c_{N})^{T}=%
\overrightarrow{c}$ be denoted $A\overrightarrow{c}=\overrightarrow{b}.$ We
convert in a standard way this linear system to an equivalent formulation
similar to (\ref{eq:PenaltyStokeseqns}). Recall that the mass matrix
associated with this basis is 
\begin{equation*}
M_{ij}=(\phi _{i},\phi _{j}),i,j=1,\cdot \cdot \cdot ,N.
\end{equation*}%
The matrix $M$ is symmetric and positive definite. Under A1 and A2, its
eigenvalues satisfy $0<C_{1}N\leq \lambda (M)\leq C_{2}N$. Given the vector $%
\overrightarrow{b}$ let $\overrightarrow{a}=M^{-1}\overrightarrow{b}$. By
construction 
\begin{equation*}
g(x)=\sum_{i=1}^{N}a_{i}\phi _{i}(x)\text{ satisfies }(g(x),\phi
_{j})=b_{j},j=1,\cdot \cdot \cdot ,N.
\end{equation*}%
By A2, $||g||$ and $(N^{-1}\sum a_{i}^{2})^{1/2}$\ are uniformly equivalent
norms. The bounds on $\lambda (M)$ (also from A2) imply $(N^{-1}\sum
a_{i}^{2})^{1/2}$\ and $(N^{-1}\sum b_{i}^{2})^{1/2}$\ are also uniformly
equivalent norms. Next define 
\begin{equation*}
w=\sum_{i=1}^{N}c_{i}\phi _{i}(x).
\end{equation*}%
By A2 again $||w||$, $(N^{-1}\sum c_{i}^{2})^{1/2}$are equivalent norms.
Thus,%
\begin{equation*}
|A^{-1}|^{2}=\max_{b}\frac{|A^{-1}b|^{2}}{|b|^{2}}\leq C\max_{g\in X^{h}}%
\frac{||w||^{2}}{||g||^{2}}.
\end{equation*}%
By construction $w,g\in X^{h}$\ satisfy%
\begin{equation*}
(\nabla w,\nabla v)+\varepsilon ^{-1}(\nabla \cdot w,\nabla \cdot v)=(g,v)%
\text{ for all }v\in X^{h}.
\end{equation*}%
Setting $v=w$ and using simple inequalities gives $||w||^{2}\leq C||g||^{2}$%
. Indeed,%
\begin{equation*}
C||w||^{2}\leq ||\nabla w||^{2}+\varepsilon ^{-1}||\nabla \cdot w||^{2}=(g,w)%
\text{ }\leq \frac{C}{2}||w||^{2}+C||g||^{2}.
\end{equation*}%
Thus $||w||^{2}\leq C||\nabla w||^{2}\leq C||g||^{2}$ and $%
||w||^{2}/||g||^{2}\leq C$. This implies $|A^{-1}|\leq C$, uniformly of $h,$ 
$\varepsilon $.

Next we estimate $|A\overrightarrow{c}|/|\overrightarrow{c}|$ where $%
u^{\varepsilon }=\sum_{i=1}^{N}c_{i}\phi _{i}(x)$ is the solution of (\ref%
{eq:PenaltyStokeseqns}). By norm equivalence, as above, and (\ref%
{eq:PenaltyStokeseqns}) this is equivalent to $||f^{h}||/||u^{\varepsilon
}|| $ where $f^{h}$\ is the $L^{2}$ projection of $f(x)$ into the finite
element space. This gives the first estimate $\kappa (u^{\varepsilon })\leq
C||f^{h}||/||u^{\varepsilon }||$. For the second estimate, we have%
\begin{eqnarray*}
||f^{h}|| &=&\max_{v\in X^{h}}\frac{(f^{h},v)}{||v||}=\max_{v\in X^{h}}\frac{%
(\nabla u^{\varepsilon },\nabla v)+\varepsilon ^{-1}(\nabla \cdot
u^{\varepsilon },\nabla \cdot v)}{||v||} \\
&\leq &\max_{v\in X^{h}}\frac{||\nabla u^{\varepsilon }||||\nabla
v||+\varepsilon ^{-1}||\nabla \cdot u^{\varepsilon }||||\nabla \cdot v||}{%
||v||} \\
\text{(using A1)} &\leq &\max_{v\in X^{h}}\frac{Ch^{-2}||u^{\varepsilon
}||||v||+\varepsilon ^{-1}Ch^{-1}||\nabla \cdot u^{\varepsilon }||||v||}{%
||v||} \\
&\leq &Ch^{-2}||u^{\varepsilon }||+\varepsilon ^{-1}Ch^{-1}||\nabla \cdot
u^{\varepsilon }||\text{, which implies} \\
\frac{||f^{h}||}{||u^{\varepsilon }||} &\leq &Ch^{-2}+C\varepsilon
^{-1}h^{-1}\frac{||\nabla \cdot u^{\varepsilon }||}{||u^{\varepsilon }||}.
\end{eqnarray*}%
This implies%
\begin{equation*}
\kappa (u^{\varepsilon })\leq C\frac{||f^{h}||}{||u^{\varepsilon }||}\leq
C\left( h^{-2}+\varepsilon ^{-1}h^{-1}\frac{||\nabla \cdot u^{\varepsilon }||%
}{||u^{\varepsilon }||}\right) .
\end{equation*}%
Using $||\nabla \cdot u^{\varepsilon }||\leq C||\nabla u||\leq
Ch^{-1}|||u^{\varepsilon }||$ and A1 yields the standard estimate $\kappa
\leq C(h^{-2}+\varepsilon ^{-1}h^{-2}).$
\end{proof}

\section{An Illustration}

The result in Section 2 gives 3 estimates of conditioning. We explore if the
3 estimates%
\begin{gather*}
\kappa \leq C(h^{-2}+\varepsilon ^{-1}h^{-2}),\text{ \ \ }\kappa
(u^{\varepsilon })\leq C\frac{||f^{h}||}{||u^{\varepsilon }||},\text{ and} \\
\kappa (u^{\varepsilon })\leq C\left( h^{-2}+\varepsilon ^{-1}h^{-1}\frac{%
||\nabla \cdot u^{\varepsilon }||}{||u^{\varepsilon }||}\right) 
\end{gather*}%
yield significantly different predictions. We investigate this question for
2 test problems and 2 finite element spaces. The first test problem has a
smooth solution and the second has an $f(x,y)$ oscillating as fast as
possible on the given mesh. The domain is the unit square and the mesh is a
uniform mesh of squares each divided into 2 right triangles. The first
finite element space is P1, conforming linear elements. The second is P2,
conforming quadratics. The space of conforming linears does not contain a
divergence-free subspace. Thus the coefficient matrix $A$ is expected to
show greater ill-conditioning as $\varepsilon \rightarrow 0$\ than with
conforming quadratics. The above constants "$C$" are $\mathcal{O}(1)$
constants, independent of $\varepsilon $ and $h$. Thus, we compute and
compare the RHS' below%
\begin{eqnarray*}
Est.1 &\simeq &h^{-2}+\varepsilon ^{-1}h^{-2}, \\
Est.2 &\simeq &\text{ }\frac{||f^{h}||}{||u^{\varepsilon }||}\text{ and } \\
Est.3 &\simeq &\varepsilon ^{-1}h^{-1}\frac{||\nabla \cdot u^{\varepsilon }||%
}{||u^{\varepsilon }||}.
\end{eqnarray*}%
We computed these values starting with $\varepsilon =1,h=1$ and halving each
until $\varepsilon =5.96E-08$ and $h=0.00195312.$ We present the results
below selected from equi-spaced $\varepsilon $-values in this data.

\textbf{Test problem 1:} We solve this test problem with P1, conforming
linear elements, and then with P2, conforming, quadratic elements. Choose
$f(x,y)=\left( \sin \left( x+y\right) ,\cos \left( x+y\right) \right) ^{T}$.
For P1 elements we have the following results for $Est.2$.
\begin{gather*}
\begin{array}{c|cccccc}
{\small \varepsilon \Downarrow \setminus h\Rightarrow }\text{ \ } & {\small %
0.125} & {\small 6.2E-2} & {\small 3.1E-2} & \text{ }{\small 1.6E-2} & 
{\small 7.8E-3} & {\small 3.9E-3} \\ 
\hline
{\small 1} & {\small 39} & {\small 38} & {\small 37} & {\small 37} & {\small %
37} & {\small 37} \\ 
{\small 6.3E-2} & {\small 1.9E2} & {\small 1.7E2} & {\small 1.7E2} & {\small %
1.7E2} & {\small 1.7E2} & {\small 1.7E2} \\ 
{\small 3.9E-3} & {\small 1.8E3} & {\small 7.8E2} & {\small 4.3E2} & {\small %
3.3E2} & {\small 3.0E2} & {\small 2.9E2} \\ 
{\small 2.4E-4} & {\small 2.5E4} & {\small 7.5E3} & {\small 2.2E3} & {\small %
8.0E2} & {\small 4.3E2} & {\small 3.3E2} \\ 
{\small 1.5E-5} & {\small 4.0E5} & {\small 1.1E5} & {\small 2.9E4} & {\small %
7.5E3} & {\small 2.1E3} & {\small 8.0E2} \\ 
{\small 9.5E-7} & {\small 6.4E6} & {\small 1.8E6} & {\small 4.5E5} & {\small %
1.1E5} & {\small 2.9E4} & {\small 7.4E3} \\ 
{\small 6.0E-8} & {\small 1.0E8} & {\small 2.9E7} & {\small 7.2E6} & {\small %
1.8E6} & {\small 4.5E5} & {\small 1.1E5}\\
\end{array}
\\
\text{ Table 1: Values of }Est.2\simeq \text{ }\frac{||f^{h}||}{%
||u^{\varepsilon }||}\text{, P1 elements}
\end{gather*}%
Down the columns (fixing $h$ and decreasing $\varepsilon $), the data for\ $%
Est.2$ shows that this quantity grows as $\varepsilon \downarrow 0$, roughly
like $\varepsilon ^{-1}$\ for fixed $h$. Across the rows, for fixed $%
\varepsilon $\ and $h\downarrow 0$, $Est.2$ decreases. Diagonals (necessary
if we choose $\varepsilon \sim h$) show mild growth. Comparing the last row
of the table with the row of $Est.1$ values shows that $Est.2$\ consistently
yields a lower estimate of ill-conditioning than $Est.1$. Concerning the
growth of $Est.2$ as $\varepsilon \downarrow 0$ for fixed $h$, since $h,||f||
$ do not change as $\varepsilon $ varies, this growth is due to $%
||u^{\varepsilon }||\rightarrow 0$ as $\varepsilon \downarrow 0$. The P1
finite element space does not have a non-trivial divergence free subspace, 
\cite{JLMNR17}. Thus, $\varepsilon \downarrow 0$\ forces $||\nabla \cdot%
u||\rightarrow 0$\ which forces $||u||\rightarrow 0$. This indicates that
ill-conditioning of penalty methods \textit{with P1 elements} as $%
\varepsilon \downarrow 0$ is an essential feature caused by the \textit{lack
of a divergence-free subspace}. For comparison with the last row in Table 1,
the values of\ $Est.1\simeq h^{-2}+\varepsilon ^{-1}h^{-2}$ for $\varepsilon
=6.0E-8$ are 
\begin{equation*}
\begin{array}{cccccc}
\text{ }{\small 1.0E9} & \text{ }{\small 4.3E9}\text{ } & {\small 1.7E10} & 
{\small 6.9E10} & {\small 2.7E11} & {\small 1.1E12}%
\end{array}%
.
\end{equation*}%
Clearly $Est.2$\ provides a smaller estimate than $Est.1$. Another
interpretation is that a poor choice of finite element space (made to
accentuate ill-conditioning) makes the problem of selecting $\varepsilon $
acute.

Table 2 presents the analogous results for $Est.3$ and P1 elements.%
\begin{gather*}
\begin{array}{c|cccccc}
{\small \varepsilon \Downarrow \setminus h\Rightarrow } & {\small 0.125} & 
{\small 6.2E-2} & {\small 3.1E-2} & \text{ }{\small 1.6E-2} & {\small 7.8E-3}
& {\small 3.9E-3} \\ 
\hline
{\small 1} & {\small 25} & {\small 50} & {\small 1.0E2} & {\small 2.0E2} & 
{\small 4.0E2} & {\small 8.0E2} \\ 
{\small 6.3E-2} & {\small 3.5E2} & {\small 6.6E2} & {\small 1.3E3} & {\small %
2.5E3} & {\small 5.0E3} & {\small 1.0E4} \\ 
{\small 3.9E-3} & {\small 4.2E3} & {\small 4.1E3} & {\small 4.5E3} & {\small %
6.3E3} & {\small 1.1E4} & {\small 2.0E4} \\ 
{\small 2.4E-4} & {\small 6.2E4} & {\small 4.9E4} & {\small 4.2E4} & {\small %
4.5E4} & {\small 3.5E2} & {\small 5.1E4} \\ 
{\small 1.5E-5} & {\small 9.8E5} & {\small 7.6E5} & {\small 6.3E5} & {\small %
4.2E4} & {\small 6.0E5} & {\small 6.4E5} \\ 
{\small 9.5E-7} & {\small 1.5E7} & {\small 1.2E7} & {\small 1.0E7} & {\small %
9.5E6} & {\small 9.4E6} & {\small 9.4E6} \\ 
{\small 6.0E-8} & {\small 2.5E8} & {\small 1.9E8} & {\small 1.6E8} & {\small %
1.5E8} & {\small 1.5E8} & {\small 1.5E8}%
\end{array}
\\
\text{Table 2: Values of }Est.3\simeq \varepsilon ^{-1}h^{-1}\frac{||\nabla
\cdot u^{\varepsilon }||}{||u^{\varepsilon }||}\text{ , P1 elements}
\end{gather*}%
For $Est.3$\ the data shows similar behavior to $Est.2$ except for fixed $%
\varepsilon $ and $h\downarrow 0$. In this case, ill-conditioning for small $%
\varepsilon $\ and $h\downarrow 0$\ is over-estimated compared with Table 1.
This is expected because the data comes from P1 elements and $||\nabla \cdot
u^{\varepsilon }||$\ occurs in $Est.3$.\ Again, $Est.3$\ still provides a
smaller estimate of ill-conditioning than $Est.1$.

We have attributed the ill-conditioning observed above as $\varepsilon
\rightarrow 0$ to the use of P1 elements. To test if this explanation is
plausible we repeated this test with P2 elements. Since this finite element
space contains a non-trivial divergence free subspace, \cite{JLMNR17}, our
intuition is that ill-conditioning would be reduced. Table 3 below presents $%
Est.3\simeq \varepsilon ^{-1}h^{-1}\frac{||\nabla \cdot u^{\varepsilon }||}{%
||u^{\varepsilon }||}$ values. $Est.2\simeq ||f^{h}||/||u^{\varepsilon }||$
values, not presented, were significantly smaller than $Est.3$ and converged
to $1.13$\ as $\varepsilon ,h\rightarrow 0$.%

\begin{gather*}
\begin{array}{c|cccccc}
{\small \varepsilon \Downarrow \setminus h\Rightarrow } & {\small 0.125} & 
{\small 6.2E-2} & {\small 3.1E-2} & \text{ }{\small 1.6E-2} & {\small 7.8E-3}
& {\small 3.9E-3} \\
\hline
{\small 1} & {\small 25} & {\small 50} & {\small 1.0E2} & {\small 2.0E2} & 
{\small 4.0E2} & {\small 8.0E2} \\ 
{\small 6.3E-2} & {\small 3.1E2} & {\small 6.2E2} & {\small 1.2E3} & {\small %
2.5E3} & {\small 5.0E3} & {\small 1.0E4} \\ 
{\small 3.9E-3} & {\small 6.7E2} & {\small 1.3E3} & {\small 2.5E3} & {\small %
5.0E3} & {\small 1.0E4} & {\small 2.0E4} \\ 
{\small 2.4E-4} & {\small 7.1E2} & {\small 1.3E3} & {\small 2.6E3} & {\small %
5.1E3} & {\small 1.0E4} & {\small 2.0E4} \\ 
{\small 1.5E-5} & {\small 7.1E2} & {\small 1.3E3} & {\small 2.6E3} & {\small %
5.2E3} & {\small 1.0E4} & {\small 2.0E4} \\ 
{\small 9.5E-7} & {\small 7.1E2} & {\small 1.3E3} & {\small 2.7E3} & {\small %
5.3E3} & {\small 1.1E4} & {\small 2.1E4} \\ 
{\small 6.0E-8} & {\small 7.1E2} & {\small 1.3E3} & {\small 2.7E3} & {\small %
5.3E3} & {\small 1.1E4} & {\small 2.1E4}%
\end{array}
\\
\text{Table 3: Values of }Est.3\simeq \varepsilon ^{-1}h^{-1}\frac{||\nabla
\cdot u^{\varepsilon }||}{||u^{\varepsilon }||}\text{, P2 elements}
\end{gather*}%
The above data indicates that with P2 elements and a problem with a smooth
solution, the contribution of the penalty term to ill-conditioning is small.

\textbf{Test problem 2:} We take 
\begin{equation*}
f(x,y)=\left( 10+50\sin \left( \frac{2\pi x}{h}+\frac{2\pi y}{h}\right)
,10+50\sin \left( \frac{2\pi x}{h}+\frac{2\pi y}{h}\right) \right) ^{T}.
\end{equation*}%
Each component oscillates as rapidly as the given mesh allows, see Figure %
\ref{FigF(x,y)} below for $h=0.125$.
\begin{figure}[htp]
\begin{center}
\includegraphics[height=2.5381in,
width=3.0933in]{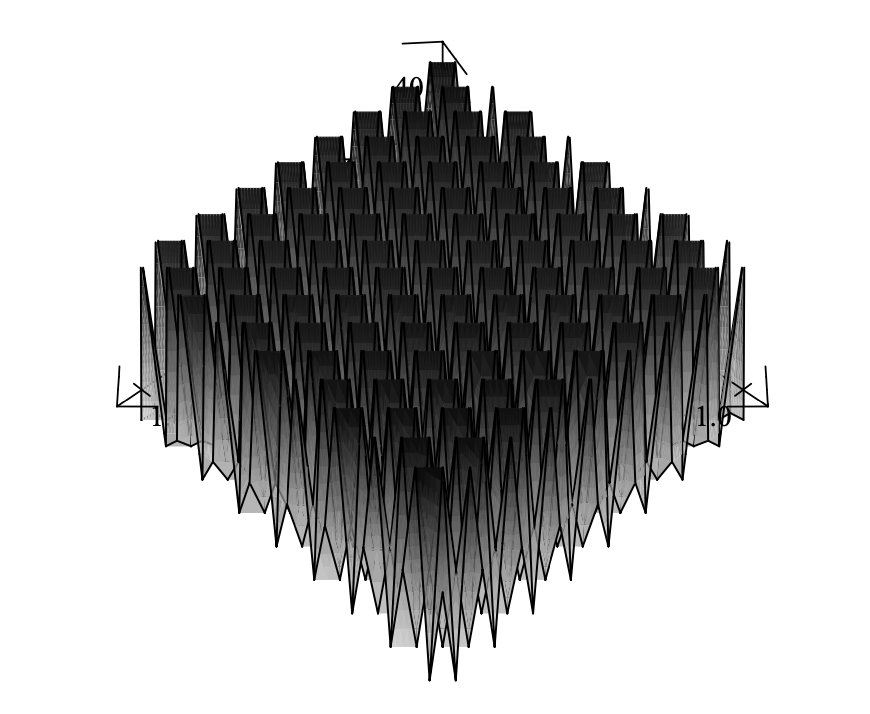}
\end{center}
\caption{Plot of $10+50\sin(\frac{2\pi x}{0.125}+\frac{2\pi y}{0.125})$.}%
\label{FigF(x,y)}%
\end{figure}
The forcing function and thus the
solution change with the mesh size. 
\begin{gather*}
\begin{array}{c|cccccc}
{\small \varepsilon \Downarrow \setminus h\Rightarrow } & {\small 0.125} & 
{\small 6.2E-2} & {\small 3.1E-2} & \text{ }{\small 1.6E-2} & {\small 7.8E-3}
& {\small 3.9E-3} \\ 
\hline
{\small 1} & {\small 1.1E2} & {\small 1.0E2} & {\small 1.0E2} & {\small 1.0E2%
} & {\small 1.0E2} & {\small 1.0E2} \\ 
{\small 6.3E-2} & {\small 5.6E2} & {\small 5.6E2} & {\small 5.6E2} & {\small %
5.6E2} & {\small 5.6E2} & {\small 5.6E2} \\ 
{\small 3.9E-3} & {\small 7.4E3} & {\small 7.5E3} & {\small 7.6E3} & {\small %
7.6E3} & {\small 7.7E3} & {\small 7.7E3} \\ 
{\small 2.4E-4} & {\small 1.1E5} & {\small 1.1E5} & {\small 1.2E5} & {\small %
1.2E5} & {\small 1.2E5} & {\small 1.2E5} \\ 
{\small 1.5E-5} & {\small 1.8E6} & {\small 1.8E6} & {\small 1.8E6} & {\small %
1.8E6} & {\small 1.8E6} & {\small 1.9E6} \\ 
{\small 9.5E-7} & {\small 2.9E7} & {\small 2.9E7} & {\small 2.9E7} & {\small %
2.9E7} & {\small 2.9E7} & {\small 2.9E7} \\ 
{\small 6.0E-8} & {\small 4.7E8} & {\small 4.6E8} & {\small 4.6E8} & {\small %
4.6E8} & {\small 4.6E8} & {\small 4.6E8}%
\end{array}\\
\text{Table 3: Values of }Est.2\simeq \frac{||f^{h}||}{||u^{\varepsilon }||}
\text{, P1 elements}
\end{gather*}
For comparison with the last row in Table 3, the values of\ $Est.1\simeq
h^{-2}+\varepsilon ^{-1}h^{-2}$ for $\varepsilon =6.0E-8$ are%
\begin{equation*}
\begin{array}{cccccc}
{\small 1.1E9} & {\small 4.3E9} & {\small 1.7E10} & {\small 6.9E10} & 
{\small 2.7E11} & {\small 1.1E12}%
\end{array}.
\end{equation*}
In Table 3, as $\varepsilon \downarrow 0,$ $Est.2$\ grows roughly like $%
\varepsilon ^{-1}$. The stable behavior of $Est.2$ as $h\downarrow 0$\ is
unexpected. The behavior of $Est.3\simeq \varepsilon ^{-1}h^{-1}\frac{%
||\nabla \cdot u^{\varepsilon }||}{||u^{\varepsilon }||}$\ for P1 elements
was similar, so not presented.

We now present tests of P2 elements for this problem. In this test $Est.3$
was larger than $Est.2,$ so we present only $Est.3$ data.%
\begin{gather*}
\begin{array}{c|cccccc}
{\small \varepsilon \Downarrow \setminus h\Rightarrow } & {\small 0.125} & 
{\small 6.2E-2} & {\small 3.1E-2} & \text{ }{\small 1.6E-2} & {\small 7.8E-3}
& {\small 3.9E-3} \\ 
\hline
{\small 1} & {\small 25} & {\small 49} & {\small 1.0E2} & {\small 2.0E2} & 
{\small 4.0E2} & {\small 7.9E2} \\ 
{\small 6.3E-2} & {\small 3.8E2} & {\small 7.5E2} & {\small 1.5E3} & {\small %
3.0E3} & {\small 6.0E3} & {\small 1.2E4} \\ 
{\small 3.9E-3} & {\small 6.0E3} & {\small 1.2E4} & {\small 2.4E4} & {\small %
4.8E4} & {\small 9.6E4} & {\small 1.9E5} \\ 
{\small 2.4E-4} & {\small 9.6E4} & {\small 1.9E5} & {\small 3.8E5} & {\small %
7.7E5} & {\small 1.5E6} & {\small 3.1E6} \\ 
{\small 1.5E-5} & {\small 1.5E6} & {\small 3.1E6} & {\small 6.2E6} & {\small %
1.2E7} & {\small 2.5E7} & {\small 4.9E7} \\ 
{\small 9.5E-7} & {\small 2.5E7} & {\small 4.9E7} & {\small 9.8E7} & {\small %
2.0E8} & {\small 3.9E8} & {\small 7.9E8} \\ 
{\small 6.0E-8} & {\small 3.9E8} & {\small 7.9E8} & {\small 1.6E9} & {\small %
3.2E9} & {\small 6.3E9} & {\small 1.3E10}%
\end{array}
\\
\text{Table 4: Values of }Est.3\simeq \varepsilon ^{-1}h^{-1}\frac{||\nabla
\cdot u^{\varepsilon }||}{||u^{\varepsilon }||}\text{ },\text{ P2 elements}
\end{gather*}%
For comparison with the last row in Table 4,\ $Est.1$ values for $%
\varepsilon =6.0E-8$ are%
\begin{equation*}
\begin{array}{cccccc}
{\small 1.1E9} & {\small 4.3E9} & {\small 1.7E10} & {\small 6.9E10} & 
{\small 2.7E11} & {\small 1.1E12}%
\end{array}%
.
\end{equation*}%
For this problem, down the columns (fixed $h$, $\varepsilon \downarrow 0$ )
the computed estimate of $\kappa (u^{\varepsilon })$\ grows roughly like $%
\varepsilon ^{-1}$. Reading across the columns, the values in each row grow
like $h^{-1}$\ (not $h^{-2}$). In all cases, $Est.3$ was smaller than $Est.1$%
.

\section{Conclusions}

The classical view is that two significant issues are impediments to penalty
methods giving low cost and highly accurate velocity approximates. The first
is ill-conditioning of the resulting system matrix. We have shown that the
effective condition number $\kappa (u^{\varepsilon })$ is much smaller than
the usual condition number due to the magnitude of the components in the
penalized eigenspaces being small in a precise sense. Motivated by this
theoretical result, we then compared the derived estimates of
ill-conditioning on two test problems and for two elements. With P1
elements, ill-conditioning was not as severe as $\mathcal{O}(\varepsilon
^{-1}h^{-2})$\ but followed $\varepsilon ^{-1}$ growth as $\varepsilon
\rightarrow 0$. For P2 elements and approximating a smooth solution, $\kappa
(u^{\varepsilon })$\ was smaller the expected $\kappa \sim h^{-2}$ for the
discrete Laplacian. With both P1 and P2 elements, and an academic test
problem with data oscillating as fast as the mesh allows, ill-conditioning
was not as severe as the expected $\mathcal{O}(\varepsilon ^{-1}h^{-2})$\
but also followed the $\varepsilon ^{-1}$ pattern as $\varepsilon
\rightarrow 0$. 

The second significant issue is the difficulty in the selection of an
effective value of $\varepsilon $. While the most commonly recommended, e.g. 
\cite{CK82,HV95}, choices are $\varepsilon =$ \textit{time step,
mesh width}, and (\textit{machine epsilon)}$^{1/2}$, none have proven
reliably effective. Recent work of \cite{KXX21, X21} may resolve this
impediment by an algorithmic, self-adaptive selection of $\varepsilon $
based on some indicator of violation of incompressibility. The estimates in
Theorem 2.1 give insight into the resulting conditioning when this is done.
Let $TOL$ denote a specified tolerance. If $\varepsilon $ is adapted so that
the penalized solution $u^{\varepsilon }$\ satisfies $\frac{||\nabla \cdot
u^{\varepsilon }||}{||u^{\varepsilon }||}\leq TOL,$ then, Theorem 2.1
immediately implies $\kappa (u^{\varepsilon })$ satisfies%
\begin{equation}
\kappa (u^{\varepsilon })\leq Ch^{-2}\left( 1+h\frac{TOL}{\varepsilon }%
\right) .  \label{EstAdaptive1}
\end{equation}%
If $\varepsilon $ is adapted so that the penalized solution $u^{\varepsilon }
$\ satisfies $\frac{||\nabla \cdot u^{\varepsilon }||}{||\nabla
u^{\varepsilon }||}\leq TOL,$ then, similarly, Theorem 2.1 implies $\kappa
(u^{\varepsilon })$ satisfies%
\begin{equation}
\kappa (u^{\varepsilon })\leq Ch^{-2}\left( 1+\frac{TOL}{\varepsilon }%
\right) .  \label{EstAdaptive2}
\end{equation}%
For (\ref{EstAdaptive2}), rewrite $||\nabla \cdot u^{\varepsilon
}||/||u^{\varepsilon }||$ as $\left( ||\nabla \cdot u^{\varepsilon
}||/||\nabla u^{\varepsilon }||\right) \left( ||\nabla u^{\varepsilon
}||/||u^{\varepsilon }||\right) $. The inverse estimate, A1, implies $%
||\nabla u^{\varepsilon }||/||u^{\varepsilon }||\leq Ch^{-1}$, yielding (\ref%
{EstAdaptive2}).

The numerical tests suggest that two factors not considered in Theorem 2.1
are significant. The first is whether the finite element space has a
nontrivial, divergence-free subspace. The second is the influence of
smoothness of the sought solution or its problem data on effective
conditioning. In addition, highly refined meshes are used in practical flow
simulations and the linear system often has large skew symmetric part. The
extension of the analysis herein to include these effects is an open problem.

\bibliographystyle{siam}

\end{document}